\newcommand{\Irr}{{\rm Irr\,}}
\newcommand{\Inf}{{\rm Inf\,}}
\newcommand{\al}{\alpha}
\newcommand{\lam}{\lambda }
\newcommand{\om}{\omega }
\newcommand{\ra}{ \rightarrow }
\def\ii{{if and only if }}
\def\ir{{irreducible }}
\newenvironment{proof}[1][Proof]{\par\noindent{\em #1}.
}{\hfill\framebox(6,6) \par\medskip}
\newtheorem{propo}{Proposition}
\newtheorem{lemma}{Lemma}
\newtheorem{corol}{Corollary}
\newtheorem{theo}{Theorem}
\begin{document}

\title{Representations of dimensions $(p^n\pm 1)/2$ \\ of the symplectic group
of degree  $2n$ \\ over a field of  characteristic $p$\footnote{
Translated from the Russian original in `Vestsi Acad. Navuk BSSR, Ser. Fiz.-Mat. Navuk,
1987, no.6, 9 - 15', (Math. Reviews MR0934193
(89b:20036).)}}
\author{A.E. Zalesskii and I.D.  Suprunenko}

\date{}

\maketitle

\bigskip

\begin{abstract}
The irreducible representations $\phi_n^1$ and $\phi_n^2$ of the
symplectic group $G_n=Sp_{2n}(P)$ over an algebraically closed
field $P$ of characteristic $p>2$
  with  highest weights $\om_{n-1}+\frac{p-3}{2}\om_n$
   and $\frac{p-1}{2}\om_n$, respectively, are investigated. It is proved that the dimension of $\phi_n^i$ ($i=1,2$) is equal to  $(p^n
+(-1)^i )/2$, all weight multiplicities of these representations
are equal to 1, their restrictions to the group $G_k$ naturally
embedded into $G_n$  are completely reducible with  \ir
constituents
 $\phi_k^1$ and $\phi_k^2$, and their restrictions to
$Sp_{2n}(p)$ can be obtained as the result of the reduction modulo
$p$ of  certain complex irreducible representations of the group
$Sp_{2n}(p)$.

These results allow us to obtain the exact list of rational
irreducible representations of simple algebraic groups over fields
of positive characteristics all whose weight subspaces have
dimension 1. This generalizes a result of Seitz.
\end{abstract}

 In this paper we consider the irreducible
representations of the symplectic group $Sp_{2n}(P)$ over an
algebraically closed field $P$ of characteristic $p>2$
  with  highest weights $\om_{n-1}+\frac{p-3}{2}\om_n$
   and $\frac{p-1}{2}\om_n$.  These representations are
   closely linked with each
other and have a number of remarkable properties that motivate an
interest to them. These properties are summarized in the following
theorem.
\medskip

\begin{theo}
\label{th1}  Let $\phi_n^1$ and $\phi_n^2$ be the irreducible
representations  of the group $G_n=Sp_{2n}(P)$ with highest
weights $\om_{n-1}+\frac{p-3}{2}\om_n$
   and $\frac{p-1}{2}\om_n$, respectively  (we omit $\om_{n-1}$ if $n=1$).

\medskip
$(A)$ The dimension of $\phi_n^i$ $(i=1,2)$ is equal to  $(p^n
+(-1)^i )/2$.

\medskip
$(B)$ All weight multiplicities of each $\phi_n^i$ are equal to
$1$.

\medskip
$(C)$ Let the group $G_k$ $(k<n)$ be naturally embedded into $
G_n$. Then the restriction $\phi_n^i |_{G_k}$ is  completely
reducible with \ir constituents
 $\phi_k^1$ and $\phi_k^2$.

\medskip
$(D)$  Let $L_k=G_k\times G_{n-k}$ be the stabilizer of a
non-degenerate subspace of dimension $2k$ in $ G_n$.  Then
$\phi_n^i |_{L_k}$ has exactly two irreducible constituents.

\medskip

$(E)$ The representation  $\phi_n^i|Sp_{2n}(p)$ can be obtained as
the result of the reduction modulo $p$ of a complex irreducible
representation of the group $Sp_{2n}(p)$.
\end{theo}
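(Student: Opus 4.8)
The plan is to identify the two complex \ir representations in question with the two \emph{Weil representations} of the finite group $Sp_{2n}(p)$ and to prove that their reductions modulo $p$ are $\phi_n^1|Sp_{2n}(p)$ and $\phi_n^2|Sp_{2n}(p)$. The Weil (oscillator) representation of $Sp_{2n}(p)$ has dimension $p^n$ and splits into two \ir constituents $W^1,W^2$ of dimensions $(p^n-1)/2$ and $(p^n+1)/2$; by part $(A)$ these equal $\dim\phi_n^1$ and $\dim\phi_n^2$. Each $W^i$ is realizable over the ring of integers $\mathcal O$ of a suitable number field, and fixing a prime ideal $\mathfrak p\mid p$ and reducing an $\mathcal O$-lattice modulo $\mathfrak p$ produces an $\overline{\mathbb F}_p$-representation $\overline{W^i}$ of $Sp_{2n}(p)$. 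The assertion $(E)$ becomes the statement $\overline{W^i}\cong\phi_n^i|Sp_{2n}(p)$.

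First I would observe that $\phi_n^i|Sp_{2n}(p)$ is itself irreducible: the highest weights $\om_{n-1}+\frac{p-3}{2}\om_n$ and $\frac{p-1}{2}\om_n$ are $p$-restricted (all coefficients are smaller than $p$) and $G_n=Sp_{2n}(P)$ is simply connected, so Steinberg's restriction theorem applies. Consequently it is enough to show that $\overline{W^i}$ and $\phi_n^i|Sp_{2n}(p)$ have the same Brauer character: since the two modules have equal dimension $(p^n+(-1)^i)/2$ and the second is irreducible, coincidence of Brauer characters forces $\overline{W^i}$ to be \ir and isomorphic to $\phi_n^i|Sp_{2n}(p)$. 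The Brauer characters are compared on the $p$-regular classes, i.e. on semisimple elements $g$ of order prime to $p$, with eigenvalues $t_1^{\pm1},\dots,t_n^{\pm1}$ on the natural module. On one side, the Brauer character of $\overline{W^i}$ is the restriction to these classes of the ordinary Weil character of $W^i$, given by the classical Gauss-sum formula. On the other side, part $(B)$ asserts that every weight of $\phi_n^i$ occurs with \mult $1$, so its Brauer character at $g$ is the plain sum $\sum_\mu\mu(g)$ of the monomials in $t_1,\dots,t_n$ indexed by the (multiplicity-free) weight set of $\phi_n^i$. The problem thus reduces to a purely combinatorial identity: the explicit weight set of $\phi_n^i$ must be determined, and the symmetric sum of the corresponding monomials must be shown to equal the value of the Weil character.

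The main obstacle is exactly this last identification. It calls for a precise description of the full weight set of $\phi_n^i$ — extractable from the analysis behind parts $(A)$ and $(B)$ — and for recognizing the resulting sum as the Weil character value on a $p$-regular class. Parts $(C)$ and $(D)$ furnish decisive structural leverage and point to an induction on $n$. Indeed, the splitting of $\phi_n^i|_{L_k}$ into two constituents in $(D)$ is the exact counterpart of the fact that the Weil representation of $Sp_{2n}(p)$ restricts to $Sp_{2k}(p)\times Sp_{2(n-k)}(p)$ as the outer tensor product of the Weil representations of the two factors, whose even and odd parts recombine into precisely two constituents; and the branching in $(C)$ anchors the base case $G_1=SL_2(P)$, where $\phi_1^1$ and $\phi_1^2$ (dimensions $(p-1)/2$ and $(p+1)/2$) are the classical reductions of the Weil representations of $SL_2(p)$. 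Transporting this tensor-product compatibility up the chain, with the multiplicity-one property $(B)$ guaranteeing that no cancellation or overcounting occurs, delivers the Brauer-character identity and hence $(E)$.
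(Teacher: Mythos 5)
Your proposal has a genuine and fatal gap: it is circular. You set out to prove the theorem, but your argument \emph{assumes} parts $(A)$, $(B)$, $(C)$ and $(D)$ as known inputs (``by part $(A)$ these equal $\dim\phi_n^1$ and $\dim\phi_n^2$'', ``part $(B)$ asserts that every weight of $\phi_n^i$ occurs with multiplicity $1$'', ``Parts $(C)$ and $(D)$ furnish decisive structural leverage'') and only attempts to derive $(E)$ from them. None of $(A)$--$(D)$ is accessible a priori: the dimension and the weight system of a $p$-restricted irreducible module for $Sp_{2n}(P)$ with a given highest weight are exactly the hard unknowns here, and you offer no independent argument for them. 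Even for $(E)$ alone, you concede that ``the main obstacle is exactly this last identification'' --- the combinatorial identity between the putative multiplicity-free weight sum of $\phi_n^i$ and the Gauss-sum formula for the Weil character --- and you do not carry it out. What remains is a plan, not a proof.

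The paper avoids this circularity by running the argument in the opposite direction. It starts from the complex Weil constituents $\theta_n^i$, proves by induction on $n$ (using the restriction formulas $(1)$ to $F_k=H_k\times H_{n-k}$) that the reductions $\overline{\theta_n^i}$ modulo $p$ are irreducible (Lemma 1), extends them to infinitesimally irreducible representations $\Theta_n^i$ of the algebraic group, and only \emph{then} identifies the highest weight of $\Theta_n^i$ as $\om_n'$ or $\om_n''$; this identification is the real work and requires Proposition 1 (a constraint on the highest weight forced by the branching to $G_{n-1}$) together with an analysis of the restriction to $L_{n-1}$. In that order, $(A)$ and $(E)$ come for free from the construction, $(B)$ follows from Lemma 2 (the eigenvalues of $\theta_n^i(s)$ for a Singer-type element $s$ of order $p^n+1$ lying in a maximal torus are simple), and $(C)$, $(D)$ follow from Lemma 1 and the formulas $(1)$. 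Your intuition that the Weil representation is the right object is correct, but to repair your proof you would need to (i) establish irreducibility of the reduction modulo $p$ without appealing to $(A)$, and (ii) prove that the resulting module has highest weight $\om_n'$ or $\om_n''$ rather than assuming the identification with $\phi_n^i$ from the outset.
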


\medskip
 The statements $(C)$ and $(D)$ will be
refined in the course of the proof. Then this theorem is used to
generalize a recent result of G. Seitz (see below) and to write
down the exact list of rational irreducible representations of
simple algebraic groups over fields of positive characteristics
all whose weight subspaces have dimension 1 (Proposition 2).

{\large Notation and some known facts}.

\smallskip
Recall that $G_n$ is the universal Chevalley group of type $C_n$
over $P$.  Below $X_n$  and $\al_1\ldots \al_n$ are the weight
system and the simple roots of $G_n$, $\al_n$  is a long root, and
$\om_1\ldots \om_n\in X_n$ are the fundamental weights. Weights
from $X_n$ can be written in the form of integral vectors of their
coordinates in the basis $\om_1\ldots \om_n$. If $n > 1$, $\lam =
(a_1\ldots a_n)\in X_n$,  then
 $\lam^0$ is the weight of $X_{n-1}$ obtained from $\lam$
by deletion of $\al_1$. We denote by  $x_\al(t)$ and $X_\al$ the
root elements of $G_n$ and its Lie algebra corresponding to a root
$\al$. We can take $\al_2\ldots \al_n$ as the simple roots of the
subgroup $G_{n-1}\subset G_n$.

\medskip
Set $\om'_n=\om_{n-1}+\frac{p-3}{2}\om_n$ for $n>1$ and $\om'_n=
\frac{p-3}{2}\om_n$ for $n=1$, $\om''_n= \frac{p-1}{2}\om_n$,
$H_n=Sp_{2n}(p)$. For $k<n$,  we assume that the group $H_k$ is
 naturally embedded into $H_n$. If  $F_k$ is the stabilizer in $H_n$ of a
non-degenerate subspace of dimension $2k$, then $F_k\cong
H_k\times H_{n-k}$.

\medskip
(I) We describe a construction of the complex irreducible
representations  $\theta_r^i$ ($i=1,2$) of $H_r$ of degrees $(p^r
+(-1)^i)/2$. Let $A_r\subset GL(p^r, {\mathbb C})$  be an
irreducible group containing the subgroup $Z$ of all scalar
matrices and such that $A_r/Z$ is an abelian group of the exponent
$p$. Let $N_r$ be the normalizer of $A_r$ in $GL(p^r,{\mathbb
C})$. Then $N_r/A_r\cong H_r$ (\cite[\S 20]{Su}).   For $p>2$, the
group  $N_r$ splits: $N_r=A_rH_r$. In order to show this, we fix
an involution $i\in N_r$ whose image is a central element in
$N_r/A_r$, and consider the group $D=C_{N_r}(i)$. It is  not
difficult to see that $D\cap A_r=Z$ and $D/Z\cong H_r$. The
splitting of $D$, i. e. the equality $D=ZH_r$, follows from the
fact that $H_r$ coincides with its  derived subgroup and has no
non-splitting central
 extension (\cite[Theorem 10 and Corollary 2]{St}), except for the case
$r=1$, $p=3$, which can be easily handled directly. It is well
known that the representation of $H_r$ obtained in this way has
two irreducible constituents of dimensions $(p^r -1)/2$ and $(p^r
+1)/2$ \cite{W,Is,Se5,Za}. This determines the irreducible
representations $\theta_r^1$ and $\theta_r^2$ of these dimensions
of the group $H_r$ (in general, not uniquely).

 The well known
equality $A_r=A_{r-1}\otimes A_1$ (the tensor product) determines
the embedding $N_{r-1}\ra  N_{r-1}\otimes E\subset  N_r$, what
allows to choose $\theta_n^i$ $(i=1,2,$ $n=1,2,\ldots )$ so that
the restriction $\theta_n^i|_{H_{n-1}} $ consists   of the
constituents $\theta_{n-1}^1$ and $\theta_{n-1}^2$ (disregarding
 multiplicities). In \cite[Theorem 2]{Za} the following
formulas are proved:

\begin{equation}\label{eq1}
\theta_n^1|_{F_k} =\theta_k^1\otimes\theta_{n-k}^2\oplus
\theta_k^2\otimes\theta_{n-k}^1, ~~~~~\theta_n^2|_{F_k}
=\theta_k^1\otimes\theta_{n-k}^1\oplus
\theta_k^2\otimes\theta_{n-k}^2 .\end{equation}

\medskip
(II) Let $\phi$ be an irreducible complex representation of $ H_1$
such that $\dim \phi =(p-1)/2$ or $(p+1)2$. Then $\phi$ remains
irreducible under reduction modulo $p$.

\medskip
This follows from the comparison of ordinary and Brauer characters
of the corresponding degrees, which can be easily done using the
character table from  \cite{Sp}.

\medskip
Ward \cite{W} has shown that the representations $\theta_n^i $
remain irreducible under reduction modulo $r$ for an arbitrary odd
prime  $r\neq p$, see also \cite{Se5}. The following lemma extends
this result to the case $r=p$.

\medskip
\begin{lemma}\label{1}
 Let  $\overline{\theta_n^i} $ be the reduction
  modulo $p$ of the
representation  $\theta_n^i $
 $ (i=1 ,2)$. Then $ \overline{\theta_n^i}$ is irreducible
 and the restriction $\overline{\theta_n^i}|_{F_k}$ is
completely reducible.
\end{lemma}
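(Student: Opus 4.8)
The plan is to argue by induction on $n$, using the complex branching rules (\ref{eq1}) as the engine and reduction modulo $p$ as a functor that commutes with restriction to subgroups and with tensor products. The base case $n=1$ is exactly statement (II): for $i=1,2$ the representation $\theta_1^i$ has degree $(p+(-1)^i)/2$, so $\overline{\theta_1^i}$ is irreducible, while the restriction to $F_k$ (with $k<1$) is vacuous.

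For the complete reducibility of the restriction I would first observe that a $\mathbb{Z}_p$-lattice chosen $H_n$-stably restricts to an $F_k$-stable lattice, so $\overline{\theta_n^i}|_{F_k}=\overline{\theta_n^i|_{F_k}}$; applying (\ref{eq1}) and the fact that reduction commutes with $\otimes$, each complex constituent $\theta_k^a\otimes\theta_{n-k}^b$ reduces to the external tensor product $\overline{\theta_k^a}\otimes\overline{\theta_{n-k}^b}$ of $F_k\cong H_k\times H_{n-k}$-modules. By the inductive hypothesis every $\overline{\theta_k^a}$ and $\overline{\theta_{n-k}^b}$ is irreducible, and over the splitting field $\overline{\mathbb{F}_p}$ an external tensor product of irreducibles of the two direct factors is again irreducible. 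Hence $\overline{\theta_n^i}|_{F_k}$ is a sum of two irreducible $F_k$-modules, say $U_1(k)\oplus U_2(k)$, and a short dimension check shows these summands are non-isomorphic; thus the restriction is multiplicity-free and completely reducible. The one technical point is to choose the lattices compatibly, or equivalently to read the composition factors off the Brauer character and then note that two non-isomorphic simples of these prescribed dimensions cannot build a non-split extension of the correct total dimension.

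For the irreducibility of $\overline{\theta_n^i}$ itself I would turn the restriction into a counting argument. Since $\overline{\theta_n^i}|_{F_k}=U_1(k)\oplus U_2(k)$ is multiplicity-free semisimple, every composition factor of $\overline{\theta_n^i}$ restricts to a nonzero semisimple $F_k$-module, and these restrictions together exhaust $\{U_1(k),U_2(k)\}$; hence $\overline{\theta_n^i}$ has at most two composition factors. If it had exactly two, a proper simple submodule $L$ would satisfy $L|_{F_k}\in\{U_1(k),U_2(k)\}$ for every admissible $k$, so $\dim L$ would equal one of the two summand dimensions simultaneously for all $k$. Writing these out, this forces $|\,4\dim L-(p^n+(-1)^i)\,|=|\,p^k\mp p^{n-k}\,|$ (the sign depending on $i$) for every $k$, and comparing $k=1$ with $k=2$ yields a contradiction as soon as $n\ge 4$, since the right-hand side then genuinely depends on $k$.

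The remaining low-rank configurations are the main obstacle and must be closed by hand. When $n$ is even one uses the element of $H_n$ interchanging the two equal-rank factors of $F_{n/2}$: for $i=1$ it swaps the summands $\overline{\theta_{n/2}^1}\otimes\overline{\theta_{n/2}^2}$ and $\overline{\theta_{n/2}^2}\otimes\overline{\theta_{n/2}^1}$, so neither can be an $H_n$-submodule and irreducibility follows. The cases genuinely left over, essentially $n=2$ with $i=2$ and $n=3$, I would settle by direct inspection using the known character and decomposition data for $Sp_4(p)$ and $Sp_6(p)$, or by combining the $F_1$-restriction with the natural embedding $H_{n-1}\subset H_n$, under which the constituents are again only $\overline{\theta_{n-1}^1}$ and $\overline{\theta_{n-1}^2}$. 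I expect the delicate part to be exactly this step—ruling out two composition factors when a single admissible $k$ gives inconclusive dimension data—whereas the inductive production of the restriction from (\ref{eq1}) is routine.
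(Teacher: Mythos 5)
Your inductive skeleton (reduce mod $p$, restrict to $F_k$, apply Formulae~(\ref{eq1}), and use that an external tensor product of irreducibles of $H_k\times H_{n-k}$ over $\overline{\mathbb F}_p$ is irreducible) matches the paper, but both of your two decisive steps have genuine gaps. For complete reducibility you assert that a multiplicity-free module with two non-isomorphic composition factors is semisimple; this is false in general, and your fallback --- that two non-isomorphic simples ``cannot build a non-split extension of the correct total dimension'' --- is vacuous, since a non-split extension has exactly the same dimension as the split one. What is actually needed, and what the paper uses, is that the centre of $F_k$ contains a \emph{semisimple} element $z$ (order $2$, $p$ odd) that is not central in $H_n$: $z$ acts by a scalar on each of the two irreducible tensor factors by Schur's lemma, by \emph{different} scalars since $z$ is not central in $H_n$, and its eigenspace decomposition then splits the restriction.

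For irreducibility your dimension comparison across $k=1,2$ genuinely requires $n\ge 4$, and the leftover cases ($n=2$ with $i=2$, and $n=3$) are deferred to ``direct inspection using known character and decomposition data for $Sp_4(p)$ and $Sp_6(p)$'' --- that is not a proof, and for general odd $p$ it is not routine. The paper closes all cases at once with a softer argument: if $\rho$ is a proper irreducible constituent, then $\rho|_{F_k}$ is a single irreducible tensor product, so the non-central element $z\in Z(F_k)$ above acts on $\rho$ by a scalar; since the normal closure of $z$ in the quasi-simple group $H_n$ is all of $H_n$, this forces $\rho(H_n)$ to be scalar, which is absurd. Your even-$n$ swapping trick for $i=1$ is fine as far as it goes, but it is not needed once the central-element argument is in place, and it does not save $i=2$ or odd $n$. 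So the proposal is not a complete proof: replace both the ``multiplicity-free $\Rightarrow$ semisimple'' step and the low-rank case analysis by the central-element argument.
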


Proof. For $n=1$ see (II). Then we use induction on $n$. Suppose
that $\overline{\theta_n^i}$ is reducible; let $\rho$ be  one of
its irreducible constituents. The induction assumption and
Formulae (1) imply
$$\rho|_{F_k}=\overline{\theta_k^l}\otimes
\overline{\theta_{n-k}^m}, ~~~~ (1 \leq l,m\leq 2).$$ But this is
false since otherwise the non-central elements from $H_r$ lying in
the center of $F_k$ would have scalar images in $\rho$. The second
statement of the theorem follows from Formulae (l) and the fact
that the center of $F_k$ consists of semisimple elements and
differs from the center of $H_n$. This completes the proof.

There is an element $s$ of order $p^n+1$ generating a subgroup
which is irreducible in the natural representation  of $H_n$ (see
\cite[Theorem 2]{Hu}).

\medskip
{\bf Lemma 2} {\it All eigenvalues of $\theta_n^i(s)$ $(i=1 ,2)$
have multiplicity $1$.}

\medskip
Proof. The representation $\theta_n^i$ is an irreducible
constituent of the representation $\tau$ of degree $p^n$ of the
group $H_n$ described in (I). In fact, it is proved in
\cite[pp.716-717, 705-706 and formula (5)]{Sh} that all
eigenvalues of $\tau(s)$ have multiplicity 1. Of course, this is
also true for the composition factors of $\tau$.


\begin{propo}\label{pr5}
  {\it Let $\Lambda$ be an infinitesimally
irreducible representation of $G_n$. Suppose that  the irreducible
representations with  highest weights $\om_{n-1}',\om''_{n-1}$ are
the only
 composition factors of the restriction
$\Lambda|_{G_{n-1}}$. Then either  the highest weight of $\Lambda$
is  $\om_{n-1}'$ or $\om_n'$, or  $n=2$, $p=3$,  and $\dim \Lambda
=1$.}
\end{propo}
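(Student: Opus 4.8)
The plan is to reconstruct the highest weight $\lambda=(a_1,\dots,a_n)$ of $\Lambda$ from the hypothesis in two stages: first determine $\lambda^0=(a_2,\dots,a_n)$, then bound the single remaining coordinate $a_1$. I work in the standard realization of $C_n$, where $\al_i=\ep_i-\ep_{i+1}$ for $i<n$, $\al_n=2\ep_n$ and $\om_i=\ep_1+\dots+\ep_i$; then $G_{n-1}$ occupies the coordinates $\ep_2,\dots,\ep_n$, restriction to $G_{n-1}$ forgets the $\ep_1$-coordinate, and the $\ep_2$-coordinate of $\lambda$ equals $a_2+\dots+a_n$ (throughout $n\ge 2$). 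The first observation is that the highest weight vector of $\Lambda$ is killed by every $X_{\al_i}$ with $i\ge 2$, so it is a $G_{n-1}$-highest weight vector of weight $\lambda^0$; consequently the irreducible $G_{n-1}$-module of highest weight $\lambda^0$ occurs as a composition factor of $\Lambda|_{G_{n-1}}$. By hypothesis this factor is $\phi_{n-1}^1$ or $\phi_{n-1}^2$, so $\lambda^0\in\{\om'_{n-1},\om''_{n-1}\}$, and only $a_1$ (with $0\le a_1\le p-1$) remains to be pinned down.

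Next I bound $a_1$. The key input is that every weight of $\phi_{n-1}^1$ and of $\phi_{n-1}^2$ has $\ep_2$-coordinate at most $(p-1)/2$: a dominant weight below $\om'_{n-1}$ or $\om''_{n-1}$ has first coordinate no larger than that of the highest weight (namely $(p-1)/2$), and every weight is $W_{n-1}$-conjugate to a dominant one, so its $\ep_2$-coordinate is bounded by its largest coordinate. Hence every $T_{n-1}$-weight of $\Lambda$ has $\ep_2$-coordinate $\le (p-1)/2$. Since $\langle\lambda,\al_1^\vee\rangle=a_1$, the vectors $\lambda,\lambda-\al_1,\dots,\lambda-a_1\al_1$ are all weights of $\Lambda$, and $\lambda-k\al_1$ has $\ep_2$-coordinate larger by $k$; therefore $a_1\le (p-1)/2-(\ep_2\text{-coordinate of }\lambda^0)$. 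For $n\ge 3$ each of $\om'_{n-1},\om''_{n-1}$ has $\ep_2$-coordinate exactly $(p-1)/2$, forcing $a_1=0$ and hence $\lambda=\om'_n$ or $\lambda=\om''_n$. For $n=2$ the same computation gives $a_1=0$, i.e. $\lambda=\om''_2$, when $\lambda^0=\om''_1$; when $\lambda^0=\om'_1$ (whose $\ep_2$-coordinate is $(p-3)/2$) it only yields $a_1\le 1$, so $\lambda=\om'_2$ if $a_1=1$ and $\lambda=\frac{p-3}{2}\om_2$ if $a_1=0$.

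It remains to treat the leftover case $n=2$, $\lambda=\frac{p-3}{2}\om_2$. For $p=3$ this is the trivial module, giving precisely the exceptional case $n=2,\ p=3,\ \dim\Lambda=1$. For $p>3$ I would show the hypothesis fails. All weights of $L(\frac{p-3}{2}\om_2)$ lie in the convex hull of the Weyl orbit of $\frac{p-3}{2}\om_2$, i.e. with all coordinates in $[-\frac{p-3}{2},\frac{p-3}{2}]$; in particular no weight has $\ep_2$-coordinate $\pm\frac{p-1}{2}$, so the extreme weights of $\phi_1^2$ are missing and $\phi_1^2$ cannot occur in $\Lambda|_{G_1}$. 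On the other hand $\lambda-\al_2$ is a weight (as $a_2=\frac{p-3}{2}\ge 1$), with $\al_1$-weight $\langle\lambda-\al_2,\al_1^\vee\rangle=2$; because every $\al_1$-weight of $\Lambda$ has absolute value $\le p-3<p$, the $\al_1$-root $SL_2$ sees only modules $L(m)$ with $m<p$, whose weight strings have no gaps, so $\lambda-\al_2-\al_1$ is again a weight. Its $\ep_2$-coordinate $\frac{p-5}{2}$ lies in the coset of $\phi_1^2$, which is therefore populated—contradicting the absence of $\phi_1^2$. Thus $\frac{p-3}{2}\om_2$ is excluded for $p>3$, and the list is complete.

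The step I expect to be the real obstacle is the last one: the reductions in the first two paragraphs are clean weight bookkeeping, whereas excluding $\frac{p-3}{2}\om_2$ requires controlling which weights actually survive in the \emph{irreducible} module $L(\frac{p-3}{2}\om_2)$, not merely in the Weyl module, and it is exactly the hypothesis $p>3$ that keeps the relevant $\al_1$-strings shorter than $p$ and makes this control possible. A minor point still to be justified in the second paragraph is the uniform bound ``$\ep_2$-coordinate $\le (p-1)/2$'' for the weights of $\phi_{n-1}^1$ and $\phi_{n-1}^2$.
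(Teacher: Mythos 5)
Your proof is correct, and it follows the same overall skeleton as the paper's: first pin down $\lam^0=(a_2,\ldots,a_n)$, then control $a_1$ by following the $\al_1$-string from the highest weight, and finally dispose of the residual case $n=2$, $\lam=\frac{p-3}{2}\om_2$ by descending through $\al_2$ and then $\al_1$. The execution, however, differs at each step. The paper works with explicit vectors: it sets $v_j=X_{-\al_1}^jv$, checks $X_{\al_1}v_{j+1}=(j+1)(a_1-j)v_j$ (so $v_j\neq 0$ for $j\le a_1<p$) and $x_{\al_k}(t)v_j=v_j$ for $k>1$, and concludes that $G_{n-1}v_j$ contributes a composition factor of highest weight $(a_2+j,a_3,\ldots,a_n)$, which must be one of the two admissible weights for every $j$; in the residual case it verifies $X_{\al_2}X_{\al_1}u=(p-3)v\neq 0$ for $u=X_{-\al_1}X_{-\al_2}v$ and again reads off a forbidden highest weight $((p-5)/2)$. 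You instead argue entirely on weight multisets: the admissible factors bound the $\ep_2$-coordinate of every weight of $\Lambda$ by $(p-1)/2$ (dominance plus Weyl conjugacy), which caps $a_1$ because $s_{\al_1}\lam=\lam-a_1\al_1$ is a weight; and in the residual case you obtain $\lam-\al_1-\al_2$ from the gap-free $\al_1$-string property of $L(m)$, $m<p$, and rule it out by parity, since $\phi_1^2$ cannot occur (its extreme weight violates the convex-hull bound) while $(p-5)/2$ is not a weight of $\phi_1^1$. Both routes cost about the same; yours trades the explicit nonvanishing computations (which the paper's appended remark notes could also be delegated to Seitz) for Weyl-invariance of the weight set and the structure of restricted $sl_2$-strings, and all the auxiliary claims you flag (the uniform $(p-1)/2$ bound, the string argument) do check out. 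One cosmetic remark: your conclusion $\lam\in\{\om_n',\om_n''\}$ is what the proof of the main theorem actually uses; the "$\om_{n-1}'$ or $\om_n'$" in the statement is evidently a misprint.
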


\begin{proof} Let $\Lambda$ be realized in a space $V$,  and let
$\lam =(a_1\ldots a_n)$ be the highest weight of $\Lambda$. Let
$v\in V$ be a non-zero vector of  weight $\lam$. Set
$v_j=X^j_{-\al_1} v$ $(0\leq j\leq a_1)$.  Then $X_{\alpha_1}v_{j+1}=(j+1)(a_1-j)v_j$. Since $a_j<p$, this implies that
$v_1, \ldots v_{a_1}\neq 0$. As $x_{\al_k} (t)$ and $X_{-\al_1}$ commute for
$k>1$ and  $t\in P$, we have  $x_{\al_k}
(t)v_j=v_j$. So the module $G _{n-1}v_j$ has a composition
factor of highest weight $(\lam-j\al)^0$. Since
 $\al_1=2\om_1-\om_2$,  we have $(\lam-j\al_1) ^0=( a_2+j, a_3\ldots a_n)$.
According to the assumption,  this vector is equal to $(0\ldots 0
, 1,( p -3 )/2  )$ or $(0\ldots 0 , ( p -1 )/2  )$ (for $n=2$ the
first vector is equal to $(p-3)/2))$. For $n>2$ and $a_1>0$,  we obtain
a contradiction by taking in turn $j=0,1$. This implies the
proposition for $n>2$. Let $n=2$. If $a_1>0$, then we obtain the
unique possibility $(1,(p-3)/2)$ by considering the cases
$j=0,1,2$ for $a_1\geq 2$, and $j=0,1$ for $a_1=1 $. Let $a_1=0$.
Then either $a_2=(p-1)/2 $ (as required), or $a_2=(p-3)/2.$ In the
latter  case we obtain $\dim \Lambda=1$ for $p=3$. Let $p>3$. Set
$u=X_{-\al_1} X_{-\al_2}v$. It is not difficult to verify that $x
_{\al_2}(t)u=u$, $X_{\al_2}X_{\al_1}u=(p-3)v$, hence $u\neq 0$.
So the module $G_{n-1} u$ has a composition factor of highest
weight $(\lam-\al_1-\al_2)^0=((p-5)/2)$, which contradicts the
assumption. This completes the  proof.
\end{proof}

{\bf Proof of the Theorem}. It is well known that every
irreducible $P$-representation of $H_r$ can be extended to an
infinitesimally irreducible representation of $G_r$. We denote by
$\Theta^n_i$ such extension  of $\overline{\theta_n^i}$ to
$G_n$ ($i=1,2$).

\medskip
We show that $\Theta^n_i$ satisfies the assumptions  $(A)$ - $(E)$
of the theorem. $(A)$ and $(E)$ follow from the construction of
$\theta^n_i$. By Lemma 2, all eigenvalues of the matrix
$\Theta^n_i(s)=\overline{\theta_n^i}(s)$ have multiplicity 1 for
some semisimple element $s\in H_n$. This implies $(B)$ since $ s$
is contained in a maximal torus of $G$ (\cite[Corollaries 19 and
21B)]{Hu}). Include $F_k$ into $L_k$. By Lemma 1, Formulae (1)
hold if one replaces $\theta_n^i$ by $\overline{\theta_n^i}$, and
hence the restriction $\Theta_n^i |_{L_k}$ has two composition
factors. It  is completely reducible since the centers of $F_k$
and $L_k$ coincide. This implies $(D)$.

\medskip
Let $\lam_i=(a_1\ldots a_n )$ be  the highest weight of
$\Theta_n^i$, and $v\neq 0$ a vector of weight $\lam_i$ in the
space affording
 $\Theta_n^i$. If $ n=1$, we have $\dim \Theta_n^i=(p\pm 1)/2$,
so $\lam_1=\om_1'$ and $\lam_2=\om_1''$. To prove this for $n>1$,
we use induction.  We  show that the composition factors of the
restriction $\Theta_n^i |_{ G_{n-1}}$ $(n>1)$ are infinitesimally
irreducible. By $(D)$, it is sufficient   to find two such
non-isomorphic factors. Obviously, one of them is of highest
weight $(a_2\ldots a_n)$.  So $(a_2\ldots a_n)=\om_{n-1}'$ or
$\om_{n-1}''$  by Lemma 1  and Formulae (1). Let $j=\,$min$\, \{k:
~1\leq k\leq n,~ a_k\neq 0\}$. Arguing as in the proof of
Proposition 1, we observe that $\Theta_n^i|_{G_{n-1}}$ has an
infinitesimally irreducible composition factor of highest weight
$(\lam_i-\al_1-\cdots -\al_j)^0\neq (a_2\ldots a_n)$ which belongs
to the module $G_{n- 1}
 u$, where $u=X_{-\al_1}\cdots X_{-\al_j}v$. (Here we take into account the
commutation relations in the Lie algebra of $G_n$  and
\cite[Proposition 5.4]{Sp}.) Now, due to the induction hypothesis,
all composition factors of the restriction $\Theta_n^i |_{ G_{k}}$
are infinitesimally irreducible for $k<n$. This is also true for
$L_k$, so Formulae (1) hold if we replace $\theta_n^i$ by
$\Theta_n^i$. This implies $(C)$.

Thus, the representations $\Theta_n^i$ satisfy the assumptions of
Proposition 1  and $\dim \Theta_n^i>1$. Hence $\lam_i\in
\{\om_n',\om_n''\}$. Note that the roots $\al_2,\ldots ,\al_n,
2\al_1+\cdots +2\al_{n-1}+\al_n$  form a basis of the root system
of $L_{n-1}$. Therefore, the module $L_{n-1}v$ contains a
composition factor $M$ of highest weight $\om=(\lam_i)^0\times
(\sum_{j=1}^n a_j)$. If $\lam_i= \om_n'$, then $\om =
\om'_{n-1}\times \frac{p-1}{2}$, and if $\lam_i=\om_n''$, then
$\om = \om''_{n-1}\times \frac{p-1}{2}$. This implies that
$M=\Theta^1_{n-1}\otimes \Theta^2_1$ in the first case  and
$M=\Theta^2_{n-1}\otimes \Theta^2_1$ in the second one (due to the
induction hypothesis). Now $(D)$ yields  that $\lam_1= \om_n'$ and
$\lam_2=\om_n''$, which proves  the theorem.


\begin{corol}\label{c1}
Let $\psi_1,\psi_2$  be the irreducible representations of
$Sp_{2n}({\mathbb C})$ with  highest weights $\om_n',\om_n''$,
respectively. Then the number of weights of $\psi_i$ is equal to
$\frac{p^n+(-1)^i}{2}$ $(i=1,2)$.
\end{corol}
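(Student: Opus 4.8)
The plan is to determine the number of weights of $\psi_i$ by trapping it between two bounds that both equal $(p^n+(-1)^i)/2$. I work in the standard orthonormal coordinates $\varepsilon_1,\ldots,\varepsilon_n$ of the weight lattice of $C_n$, in which $\om_n=\varepsilon_1+\cdots+\varepsilon_n$, the dominant weights are the integral vectors $(x_1,\ldots,x_n)$ with $x_1\ge\cdots\ge x_n\ge 0$, and the root lattice is $Q=\{x\in\mathbb{Z}^n:\ x_1+\cdots+x_n\ \text{is even}\}$. Writing $q=(p-1)/2$, the two highest weights become $\lam_2=\om''_n=q(\varepsilon_1+\cdots+\varepsilon_n)$ and $\lam_1=\om'_n=q(\varepsilon_1+\cdots+\varepsilon_n)-\varepsilon_n$. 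Since all weights of $\psi_i$ lie in the single coset $\lam_i+Q$, the parities $\sum_j x_j\equiv nq$ (for $i=2$) and $\sum_j x_j\equiv nq-1$ (for $i=1$) will be the decisive invariant.

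For the lower bound I would compare $\psi_i$ with the modular representation $\phi^i_n$ of the \emph{same} highest weight $\lam_i$ produced in the proof of the Theorem. The weights of the complex irreducible $\psi_i$ form the saturated set with highest weight $\lam_i$, namely those $\mu\in\lam_i+Q$ whose dominant $W$-conjugate does not exceed $\lam_i$ in the dominance order; and the weights of \emph{any} module generated by a highest weight vector of weight $\lam_i$ — in particular those of $\phi^i_n$ — lie among them. Hence the number of weights of $\psi_i$ is at least the number of \emph{distinct} weights of $\phi^i_n$, which by parts $(B)$ and $(A)$ of the Theorem equals $\dim\phi^i_n=(p^n+(-1)^i)/2$.

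For the upper bound I would use that every weight of $\psi_i$ lies in the convex hull of the orbit $W\lam_i$. Because all coordinates of $\lam_i$ have absolute value at most $q$ and the cube $\{x:\ |x_j|\le q\ \text{for all }j\}$ is convex and $W$-invariant, this hull is contained in that cube, which has exactly $(2q+1)^n=p^n$ integral points. It then remains to count the integral cube points in the coset $\lam_i+Q$. For a single coordinate $\sum_{x=-q}^{q}(-1)^x=(-1)^q$, so the number of cube points with $\sum_j x_j$ even exceeds the number with $\sum_j x_j$ odd by $\prod_j(-1)^q=(-1)^{nq}$; splitting $p^n$ accordingly shows that the coset $\om''_n+Q$ (where $\sum_j x_j\equiv nq$) carries $(p^n+1)/2$ points and the coset $\om'_n+Q$ (where $\sum_j x_j\equiv nq-1$) carries $(p^n-1)/2$ points, in both cases independently of the parities of $n$ and $q$. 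Thus the number of weights of $\psi_i$ is at most $(p^n+(-1)^i)/2$, and with the lower bound this forces equality.

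The main obstacle is the upper bound, and within it the parity bookkeeping: one must verify that the coset of $\lam_i$ is exactly the one carrying $(p^n+(-1)^i)/2$ of the $p^n$ cube points, uniformly in $n$ and $q$. The rest is standard, the containment of the weights in the cube being the usual fact that weights lie in the convex hull of the Weyl orbit of the highest weight, and the lower bound using only that the weights of a highest weight module lie below its highest weight. As an alternative for the lower bound one could invoke Lemma 2: the regular element $s$ of order $p^n+1$ lies in a maximal torus, and its eigenvalues on $\phi^i_n$, all of multiplicity one, already exhibit $(p^n+(-1)^i)/2$ distinct weights common to $\phi^i_n$ and $\psi_i$.
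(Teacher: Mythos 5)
Your argument is correct, but it takes a genuinely different route from the paper's. The paper's proof of this corollary is a one-line appeal to Premet's theorem \cite{P}, which asserts that for $p>2$ the set of weights of the restricted irreducible modular representation $\phi_n^i$ coincides with the set of weights of the complex irreducible $\psi_i$ with the same highest weight; combined with parts $(A)$ and $(B)$ of the Theorem this immediately gives the count. You replace the appeal to Premet by a two-sided estimate: your lower bound uses only the elementary containment of the weights of $\phi_n^i$ among those of $\psi_i$ (valid for any highest weight module) together with $(A)$ and $(B)$, while your upper bound traps the weights of $\psi_i$ inside the cube $[-q,q]^n$ (with $q=(p-1)/2$) intersected with the coset $\lambda_i+Q$ of the root lattice and counts lattice points. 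The bookkeeping checks out: the root lattice of $C_n$ is indeed the even-coordinate-sum sublattice of $\mathbb{Z}^n$, the identity $\sum_{x=-q}^{q}(-1)^x=(-1)^q$ gives $(p^n+1)/2$ cube points in the coset of $\om_n''$ (coordinate sum $\equiv nq$) and $(p^n-1)/2$ in the coset of $\om_n'$ (coordinate sum $\equiv nq-1$), uniformly in the parities of $n$ and $q$, and the containment of all weights in the cube follows from $W$-invariance and convexity. What your approach buys is a self-contained, elementary proof using nothing beyond the Theorem and standard facts about weights (convex hull of the Weyl orbit, single coset of the root lattice); what it costs is generality, since the cube count is special to this pair of highest weights, whereas Premet's theorem determines the exact weight set of any restricted irreducible in one stroke.
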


\begin{proof} By Premet's theorem \cite{P}, the numbers of  weights
of $\psi_i$ and $\phi_i$ are equal for $p>2$. It remains to use
assertions  $(A)$ and $(B)$ of the theorem.
\end{proof}

Let $G$ be a simply connected  simple algebraic group over $P$; below $p$
may  be equal to  2. Let $B=(\al_1,\ldots ,\al_n)$ be the set of
 simple  roots of $G$,  and let $\om_1\ldots \om_n$ be the fundamental
weights labelled as in \cite{Bo}.  Below $\Irr G$ and $\Inf G$ are
the sets of irreducible rational and infinitesimally irreducible
representations of $G$, respectively, $X(G)$, resp., $X^+(G)$ is
the set of its weights, resp., dominant weights; $X(\phi)$ and
$\om(\phi)$  are the weight system and the highest weight of a
representation $\phi\in\Irr G$. If $\om\in X^+(G)$, then
$\phi(\om)$ is the irreducible representation of $G$ with highest
weight $\om$.  For $p=2$, $G=C_n(P)$ or   $F_4(P)$, and for $p=3$,
$G=G_2(P)$, we set $\Inf_1 G$ (respectively, $\Inf_2 (G)$)) to be  equal to
$\{\phi\in\Inf  G\, |\,\langle \om(\phi),\beta\rangle=0\}$ for all
long (respectively, short) roots $\beta\in B\}$,
$\Inf{}'\,G=\Inf_1G\cup\Inf_2\, G$. Here $\langle
\om(\phi),\beta\rangle$ is defined as in \cite[\S 3]{St}.  Let
$I(G)$ be the set of representations $\phi\in \Irr G$ all whose
weight multiplicities are equal to 1, and let $I_0(G)=I(G)\cap\Inf
G$, \ $I'_0(G)=I(G)\cap \Inf'G$ in those cases where the set $\Inf'
G$ is defined.

\medskip
 In the course of the investigation of
irreducible embeddings of simple algebraic groups Seitz \cite{Se}
has singled out a certain set $M$ of \ir representations with the
property $I_0(G)\subset M \subset\Inf G$ and  $I_0'(G)\subset M \subset
\Inf' G$ if the sets $I'_0(G)$ and $\Inf' G$ are defined. This was
sufficient for his purpose, so he did not consider the problem of
 determining  $I(G)$. This problem seems to us to be rather
important, so we  continue the analysis of Seitz' list to
determine $I(G)$. Here \cite[Theorem 6.1]{Se} and statement $(B)$
of our main theorem are essentially used.

According to Steinberg \cite[Theorem 11.1]{St},  in the cases
where  the sets $\Inf_iG$ $(i=1,2)$ are defined, each
representation $\phi\in\Inf G$ is of shape $ \phi_1\otimes \phi_2$
where $\phi_i\in\Inf_i G$ (see also \cite[Corollary of Theorem
41]{Ste});   here it is obvious that  if $\phi\in I(G)$, then
$\phi_1,\phi_2\in I'_0(G)$. Taking into account the natural
isomorphism of the weight systems of $ G$ and the corresponding
Lie algebra over the complex numbers, we observe that
$\phi(\om)\in I_0(G)$  if $\om$ is a miniscule weight (\cite[Ch.
VIII, \S 2.3]{Bo}). In this case the Weyl group is transitive on
$X(\phi(\om))$, and hence all weights have multiplicity $1$.

\begin{corol}\label{cc2}
Let $G=C_n(K)$ and  $n>1$. A non-trvial representation $\phi\in\Inf G$
with  highest weight $\om$ belongs to $I(G)$ \ii either
$\om=\om_n$, $n=2,3$, or $p=2$ and $\omega=\omega_1$ or
$\omega_n$, or $p>2$
 and $\om\in\{\om_1,\om_n', \om_n''\}$.
\end{corol}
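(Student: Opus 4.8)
The plan is to prove the two directions of the equivalence separately, using Seitz's list as the master input for the converse.

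For the ``if'' direction I would verify multiplicity-freeness of each listed representation directly. The weight $\om_1$ affords the natural module and is minuscule, so the remark preceding the corollary already gives $\phi(\om_1)\in I_0(G)$ for every $p$. For $p>2$ the representations with highest weights $\om_n'$ and $\om_n''$ are exactly $\phi_n^1$ and $\phi_n^2$, all of whose weight multiplicities equal $1$ by statement $(B)$ of the main theorem; this is the point at which $(B)$ is essentially used. For $\om=\om_n$ with $n\le 3$ I would observe that $\phi(\om_n)$ is a quotient of the Weyl module $V(\om_n)$, whose formal character is independent of $p$, and that a direct count of the weights of $V(\om_n)$ (the orbit of $e_1+\cdots+e_n$ together with the lower layers) shows it to be already multiplicity-free for $n=2,3$; hence so is its irreducible quotient.

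The characteristic-$2$ clause I would isolate at the same stage. For $n\ge 4$ the complex module $V(\om_n)$ already carries a repeated weight (for even $n$, the zero weight), which is why $\om_n$ can survive for general $n$ only when $p=2$; there the exceptional isogeny collapses $\phi(\om_n)$ to the $2^n$-dimensional spin-type module whose weights form a single Weyl orbit, and a weight count confirms multiplicity-freeness for all $n$. For the ``only if'' direction I would invoke \cite[Theorem 6.1]{Se}: since $I_0(G)\subseteq M\subseteq\Inf G$, every $\phi\in I(G)\cap\Inf G$ occurs in Seitz's explicit finite list, so it remains only to decide which of its entries are genuinely multiplicity-free. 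When $p=2$ the sets $\Inf_iG$ are defined, and Steinberg's factorization \cite[Theorem 11.1]{St} writes $\phi=\phi_1\otimes\phi_2$ with $\phi_i\in\Inf_iG$ and $\phi_1,\phi_2\in I_0'(G)$; checking that a tensor product of two non-trivial multiplicity-free modules acquires a repeated weight then forces one factor to be trivial, reducing matters to the multiplicity-free members of $\Inf_1G$ and $\Inf_2G$, namely $\om_1$ and $\om_n$. To eliminate the surplus entries for odd $p$ I would use that the weight support of $\phi(\om)$ coincides with that of the complex module $\psi(\om)$ (Premet's theorem, \cite{P}), so that $\phi\in I(G)$ \ii $\dim\phi(\om)=|X(\psi(\om))|$; for each remaining highest weight, and in particular for $\om_n$ with $n\ge 4$ (where the relevant fundamental Weyl module is irreducible and hence carries the complex multiplicities), this equality fails and a weight of multiplicity at least $2$ appears.

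The main obstacle is precisely this pruning step. In small characteristic $\phi(\om)$ is a proper quotient of $V(\om)$, so weight multiplicities cannot be read off from the complex picture without first controlling the irreducible modular dimension; the delicate bookkeeping is to confirm that, among the $\om_n$-type weights, exactly $n\le 3$ survives for odd $p$ while all $n$ survive for $p=2$. This is where the explicit dimension and character data (via Premet's theorem and the Weyl formulas) together with the exceptional isogeny at $p=2$ are indispensable.
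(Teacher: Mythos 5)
Your overall architecture coincides with the paper's: Seitz's \cite[Theorem 6.1]{Se} supplies the list of candidates, $\om_1$ is minuscule, $\om_n'$ and $\om_n''$ are covered by statement $(B)$ of the theorem, $\om_n$ for $n=2,3$ is checked on the ($p$-independent) character of the Weyl module, the case $p=2$, $\om=\om_n$ uses $B_n(P)=C_n(P)$, and the residual tensor product $\phi(\om_1)\otimes\phi(\om_n)$ is killed by exhibiting a repeated weight (your general principle that a tensor product of two non-trivial multiplicity-free modules always acquires a repeated weight is overstated, but the single instance you need is exactly the one the paper verifies via the weight $\om_n-\frac{1}{2}\al_n$).

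The genuine gap is in the exclusion of $\om_n$ for $n\ge4$ and $p>3$. You assert that ``the relevant fundamental Weyl module is irreducible and hence carries the complex multiplicities.'' This is precisely the delicate point and cannot be taken for granted: irreducibility of $V(\om_n)$ for $C_n$ is a $p$-dependent condition governed by the composition-factor pattern determined in \cite{PS}. For instance, at $p=3$ and $n\ge4$ the module $V(\om_n)$ is reducible ($\dim V(\om_4)=42$ while $\dim\phi(\om_4)=\dim\phi(\om_4'')=41$), so an argument resting on irreducibility of $V(\om_n)$ would there even return the wrong answer, since $\phi(\om_n)=\phi(\om_n'')$ does lie in $I(G)$; and further reducible cases occur for larger $n$ and suitable $p>3$. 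The paper needs, and cites, irreducibility of $V(\om_4)$ only for $C_4$ with $p>3$ (Example 5 of \cite{PS}), which forces the zero weight space of $M_4=\phi(\om_4)$ to have dimension $2$; for $n>4$ it then restricts to a naturally embedded $G_4$, observes that the $G_4$-module generated by a highest weight vector has $M_4$ as a composition factor, and concludes that $M_n$ inherits a weight of multiplicity at least $2$. Without this restriction step, or an actual computation of $\dim\phi(\om_n)$ to feed into your Premet-theorem criterion, your pruning of $\om_n$ for $n\ge5$ does not go through. A second, smaller omission: for $p=3$ Seitz's list also contains every $\om_r$ with $1<r<n-1$, and these have to be eliminated explicitly; the paper does so by showing that $\om_{r-2}$ has multiplicity $n-r+1$ in $V_r$ and, since the composition factors of $V_r$ are multiplicity-free and $\om_{r-2}$ can occur only in those with highest weights $\om_r$ and $\om_{r-2}$, multiplicity at least $n-r\ge2$ in $\phi(\om_r)$.
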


\begin{proof}  First let $p>2$. By \cite[Theorem 6.1]{Se}, if $
\phi\in I_0(G_n)$,  then either
$\om\in\{\om_1,\om_n,\om_n',\om_n''\}$, or $p=3$ and  $\om=\om_i$
$ (i=1,\ldots,  n)$. It  is clear that $\phi(\om_1)\in I(G)$ (for
$p=2$ also) since  $\om_1$ is a miniscule weight. By the assertion S
$(B)$ of the theorem, $\phi(\om_n'),\phi(\om_n'')\in I(G_n)$. It
follows from the description of a basis of the Weyl module $V_r$
with highest weight $\om_r$ for the group $G$, see \cite[Theorem
1, p. 1324]{PS}) that for $r>1$,  the multiplicity of the weight
$\om_{r-2}$ in $V_r$ is equal to $n-r+1$ and the multiplicity of
the zero weight in $V_4$ for $n=4$ equals  $2$. By \cite[Theorem 2(i)]{PS},
 every composition factor of $V_r$ occurs with multiplicity 1.
 Therefore,
the multiplicity of the weight $\om_{r-2}$ in $M_r:=\phi(\om_r)$
is at least $n-r$ since the weight $\om_{r-2}$  can only occur in
 the composition factors of $V_r$ with  highest weights $\om_r$ and
$\om_{r-2}$. Therefore,  $\phi(\om_r )\not\in I(G_ n)$ for $1<r<n-1$.

 Observe that $\om_n'=\om_{n-1}$ and $\om_n''=\om_n$ for $p=3$.

The case where $p>3$ and  $\om=\om_n$.  For $n=4$,  the module $V
_4$ is irreducible (see Example 5 at the end of \cite{PS}), and as
we have noted before,  the multiplicity of the weight 0 in $M_4$
is equal to $2$. So $\phi(\om_4  )\notin I(G _4)$. Let $v\neq 0$
be a vector of  weight $\om_n$ in the $G_n$-module $M_n$ $(n>4)$.
Then the $G_4$-module $G_4v$ has a composition factor $M_4$.  This
implies that $M_n$ has a weight subspace  of dimension at least $
2$, that is, $\phi(\om_n)\notin G_n$.

 For $n=2,3$,  one can easily  check that
all weight subspaces of $V_n$, and hence of  $M_n$, have dimension
$1$.

 Let $p=2$.    If $\phi\in I_0'(G )$, then
   $\om\in\{\om_1,\om_n\}$ by \cite[Theorem 6.1]{Se}.   For $p=2$,
   we have  $B_n ( P ) = C_ n ( P )$
and $\phi(\om_n)\in  I(  B_ n(P  ) )$ since $\om_n$ is a miniscule
weight of $B_n(P) $. Therefore, $\phi(\om_n)\in  I'_0( C_ n(P  )
)$.

Due  to  the  remark prior to  Corollary 2,  it remains to
consider the case where $\phi=\phi(\om_1)\otimes \phi(\om_n)$.
Note that $\om_n-\al_n\in X(\phi(\om_n))$, and $\pm
\frac{\al_n}{2}\in X(\phi(\om_1))$. So
$\om_n-\frac{\al_n}{2}=(\om_n-\al_n)+\frac{\al_n}{2}\in X(\phi)$
is of multiplicity  $\geq2$, that is, $\phi\notin I(G)$.  This
completes the proof.
\end{proof}

It is well  known that $I ( G ) = $\Irr$\, G$ if $G = A_1( P )$.

\begin{propo}\label{PR2}
  Let $G\neq A_1(P)$    be a simply connected  simple algebraic
group over an  algebraically closed field $ P$   of characteristic
$ p>0$. Define the set of weights $\Omega=\Omega(G)$ as follows:

{\small
$$\Omega(A_n(P) ) =\{\om_i, a\om_1,b\om_n, c\om_j+
(p-1-c)\om_{j+1}~(1\leq i\leq n,~  1\leq j< n,~ 0\leq
a,b,c<p\};$$

$$\Omega(C_n(P) ) =\{\om_n~{\rm  for}~ n=2,~~~~~
\om_1,\om_{n-1}+\frac{p-3}{2}\om_n, \frac{p-1}{2}\om_n\}  ~{\rm  for}~ p>2$$
$$~{\rm
and}~~~~\{\om_1, \om_n\}~{\rm  for}~p=2;$$}

 $$\Omega(B_n(P) ) =\{\om_1, \om_n\}~~{\rm for}~~ n> 3,p>2; ~~~~~\Omega(D_n(P) )
=\{\om_1, \om_{n-1}, \om_n\};$$
$$\Omega(E_6(P) ) =\{\om_1,
\om_6\};~~~~~\Omega(E_7(P) ) =\{ \om_7\};$$  $$\Omega(F_4(P) )
=\{\om_4\}~{\rm  for}~p=3~{\rm  and}~ \emptyset~ {\rm
for}~p\neq3;$$
$$\Omega(G_2(P)
) =\{\om_1\}~{\rm  for}~p\neq 3~{\rm  and}~ 
\{\om_1,\om_2\}~{\rm  for}~p=3.$$ \smallskip Let $\phi$ be an irreducible
rational representation of $G$ with highest weight
$\om=\sum_{i=0}^k p^i\lam_i$ where $\lam_i$ are the highest
weights of representations from {\rm Inf}$(G)$. The multiplicities
of  all weights of   $\phi$ are equal to $1$ if
and only if $\lam_l=0$ or $\lam_l\in\Omega (G)$  for $0\leq l\leq
k$, and $\lam_{l+1}\neq \om_1$  provided $G=C_n(P)$, $p=2$, $
\lam_l=\om_n$, or $G=G_2(P)$,  $p=2$, $\lam_l =\om_1$,  or
$G=G_2(P)$, $p=3$,  $\lam_l =\om_2$.
\end{propo}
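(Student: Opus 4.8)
The plan is to reduce the whole statement to Steinberg's tensor product theorem and then to a combinatorial analysis of how weight systems add across Frobenius twists. Writing $\om=\sum_{l=0}^k p^l\lam_l$ with each $\lam_l$ restricted, Steinberg's theorem gives $\phi=\phi(\lam_0)\otimes\phi(\lam_1)^{[p]}\otimes\cdots\otimes\phi(\lam_k)^{[p^k]}$ (using in addition \cite[Theorem 11.1]{St} in the special-isogeny situations where the sets $\Inf_iG$ are defined). Consequently $X(\phi)=\{\sum_l p^l\mu_l:\mu_l\in X(\phi(\lam_l))\}$, and the multiplicity of a weight $\nu$ is the number of tuples $(\mu_0,\ldots,\mu_k)$ with $\sum_l p^l\mu_l=\nu$, each term weighted by the product of the multiplicities of the $\mu_l$ in the respective factors. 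Everything therefore comes down to (i) when each factor is multiplicity-free and (ii) when the $p$-adic superposition $\sum_l p^l\mu_l$ represents each $\nu$ uniquely.

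First I would settle the necessity of $\lam_l\in\Omega(G)\cup\{0\}$. If some factor $\phi(\lam_l)$ carries a weight $\mu$ of multiplicity $\geq2$, then fixing $\mu_i=\om(\phi(\lam_i))$ for $i\neq l$ and $\mu_l=\mu$ already exhibits the weight $\sum_{i\neq l}p^i\om(\phi(\lam_i))+p^l\mu$ with multiplicity $\geq2$ in $\phi$; hence each factor must lie in $I_0(G)$. Identifying the nontrivial members of $I_0(G)$ with $\Omega(G)$ is then the type-by-type bookkeeping: for $C_n$ this is exactly Corollary 2 (which itself rests on statement $(B)$ of the main theorem and on \cite[Theorem 6.1]{Se}), and for the other types it is read off Seitz's list together with the remark preceding Corollary 2 that every minuscule weight lies in $I_0(G)$.

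With each factor now in $I_0(G)\cup\{0\}$, the remaining content is to decide when $\phi$ itself is multiplicity-free. A nontrivial coincidence $\sum_l p^l\mu_l=\sum_l p^l\mu_l'$ gives $\sum_l p^l\delta_l=0$ with $\delta_l=\mu_l-\mu_l'$ lying in the (bounded) set of weight differences of $\phi(\lam_l)$, all contained in the root lattice $\mathbb{Z}\Phi$. Taking the least $l$ with $\delta_l\neq0$ and dividing by $p^l$ forces $\delta_l=-p(\delta_{l+1}+p\delta_{l+2}+\cdots)\in p\,\mathbb{Z}\Phi$, so a collision can only begin at a factor whose set of weight differences meets $p\,\mathbb{Z}\Phi$ nontrivially, and then only if $\delta_l/p$ is itself realizable as a weight difference of the next factor $\phi(\lam_{l+1})$. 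For every $\lam_l\in\Omega(G)$ the weight differences are short, so this double requirement can hold only in a few small-characteristic situations, and a direct inspection shows a genuine collision occurs exactly for the three consecutive pairs $(\lam_l,\lam_{l+1})$ excluded in the statement; conversely, for each of those pairs one produces two colliding weights by an explicit computation entirely analogous to the characteristic-$2$ computation for $C_n$ in the proof of Corollary 2. This gives both directions.

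The hard part will be the last step: controlling the cross-factor collisions sharply enough to prove that no coincidence survives beyond the three listed pairs. This demands, for each admissible $\lam_l\in\Omega(G)$, an exact description of the set of weight differences together with its intersection with $p\,\mathbb{Z}\Phi$, and it is precisely here that the exceptional low-rank and small-$p$ phenomena (the special isogenies for $C_n$ and $F_4$ in characteristic $2$ and for $G_2$ in characteristic $3$) intervene and must be examined case by case.
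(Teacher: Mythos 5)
Your overall architecture coincides with the paper's: Steinberg's tensor product theorem, the observation that each factor $\phi(\lam_l)$ must itself be multiplicity-free (so $\lam_l\in\Omega(G)\cup\{0\}$ by Corollary 2 and Seitz's list), and then an analysis of when two $p$-adic superpositions $\sum_l p^l\mu_l$ collide. The necessity argument and the reduction of a collision to a relation $\mu_1-\mu_2=p(\mu_1'-\mu_2')\neq 0$ between weight differences of consecutive layers are exactly what the paper does (its equation (2)).

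The gap is in the step you yourself flag as ``the hard part'': you assert that ``for every $\lam_l\in\Omega(G)$ the weight differences are short, so this double requirement can hold only in a few small-characteristic situations, and a direct inspection shows a genuine collision occurs exactly for the three excluded pairs,'' but you give no mechanism that makes this inspection finite, uniform over all types and all $p$, and provably exhaustive. The paper supplies precisely such a mechanism: the integer-valued linear form $a(\lam)=\langle\lam,\al\rangle$ for $\al$ the maximal root satisfies $|a(\mu)|\le 2a(\om(\rho))$ for any difference $\mu$ of weights of $\rho$, while the relation $\mu=p(\mu_1'-\mu_2')$ with $\mu_1'-\mu_2'$ a nonzero dominant radical weight forces $a(\mu)\ge p$; combining the two bounds with the explicit values of $a(\om')$ for $\om'\in\Omega(G)$ (e.g.\ $a\le(p-1)/2$ for $C_n$, $p>2$, and $a=1$ for $D_n,E_6,E_7$) and with the fact that dominant weights $\lam$ with $a(\lam)=1$ are not radical in types $A_n,D_n,E_6,E_7$ kills all but the three listed configurations, which are then realized explicitly. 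Without this (or an equivalent quantitative substitute) your ``direct inspection'' is an unproven claim, not a proof. A secondary inaccuracy: from $\sum_l p^l\de_l=0$ you conclude that $\de_l/p$ must be a weight difference of the \emph{next factor} $\phi(\lam_{l+1})$, but in fact it is only a weight difference of the entire tail $\phi(\lam_{l+1})\otimes Fr\circ(\cdots)$; pinning the obstruction down to the single factor $\lam_{l+1}$ (as the statement of the proposition requires) needs a further bound on how the tail's weight differences decompose layer by layer, which the paper handles by induction on $k$ together with the same $a$-value estimates.
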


 \begin{proof}  First, let  $ \phi\in\Inf G$ and $\om\neq 0$.
The case where $G = C_n(P)$  has been   considered in Corollary 2. Let $G
\neq C_n(P)$ and $G \neq  G_2(P)$ if  $p=3$. Then $\om\in\Omega $
for $\phi\in I_0(G)$ by \cite[Theorem 6.l]{Se} and the observation
prior to Corollary 2, and it suffices  to consider the cases where
$\om$ is not a miniscule weight. It is well known that $\phi \in
I_0( G)$ if $G = B_ n ( P)$ or $G_2(P)$ and $\om=\om_1$ (this is
also true for $p=3$.) For $p=3$ and $G=F_4(P)$,  Wong
\cite[p.4]{W} has shown that $\phi(\om_n)\in I_0 ( G )$. It is
proved in \cite{S1} that $\phi\in I_0(G)$  if $G=A_n(P)$ and
$\om=c\om_i+(p-1-c)\om_{i+1}$ $(1\leq i\leq n)$ (\cite[Theorem
3]{S1}) or $\om=a\om_1$ or $b\om_n$ (\cite[Remark 1]{S1}). In the
latter case   this is also mentioned  in \cite[1.14]{Se}.

\medskip
 Now let $G=G_2(P)$ for $p=3$. By  \cite[Theorem 6.1]{Se},
 $\om=\{\om_1,\om_2\}$ for $\phi\in I_0' (G)$. For $p=3$, the representation $\phi
(\om_2)$ belongs to  $I_0'(G)$ since it can be obtained from
$\phi(\om_1)\in I_0'(G)$ by twisting it with  an automorphism of
$G$ (see \cite[Corollary 11.2]{St}). Due to the remarks prior to
Corollary 2, it remains to consider the representation
$\phi=\phi(\om_1)\otimes \phi(\om_2)$. Since $\om_1-\al_1,
\om_1-\al_1-\al_2\in X (\phi(\om_1))$  and $\om_2,\om_2-\al_2\in X
(\phi(\om_2))$,  the weight $\om_1+\om_2-\al_1-\al_2\in X (\phi)$
has  multiplicity at least 2, i.e.  $\phi\notin \Irr G$.

\medskip
 Now  let  $\phi\in $Irr$\,G$. By  \cite[Theorem 41]{Ste},
$\phi = \otimes_{l=0}^{k}Fr^l\circ \phi(\lam_l)$ where $Fr$ is the
Frobenius morphism associated with raising elements of $P$ to the $p$th power. If $ \phi\in I (G)$,  then it is obvious that
$\phi(\lam_l)\in I_0(G)$;  it follows from the above that every
weight $\lam_l\in \Omega\cup \{0\}$.

\medskip
Let $ \rho\in I_0(G)$,  $\psi\in I(G)$. It is clear that the
representation $\rho \otimes Fr\circ \psi\notin I(G)$  if and only
if

\begin{equation}\label{eq2}
\mu_1-\mu_2=p(\mu_1'-\mu_2')\neq 0\end{equation}

\noindent for some weights $\mu_i\in X (\rho)$, $\mu_i'\in
X(\psi), $ $i=l,2$. Set $\mu_1-\mu_2=\mu$ and S $\om' =\om(\rho)$.
Acting by the Weyl group, we make  $\mu$ a dominant weight. Observe
that $\mu$  and $\mu_1'-\mu_2'$ are radical weights. Let $\al$ be
the maximal root of $G$, $a(\lam)=\langle \lam,\al\rangle $ for
$\lam\in X(G)$,  and $a=a(\rho)=
  a(\om')$.    As it is noted in \cite[Lemma
11]{S2}, $a(\lam)\leq a$ for $\lam\in X (\rho)$. In order to find
$a$, one may apply
 formulae from \cite[Introduction]{S2}
   or   proceed by     direct calculations
using the root tables from \cite{Bo}.  Note that  $a (\lam )$ is
an integer valued linear function  on $X(G)$ (see \cite[\S
3]{Ste}); $a(\lam)\neq 0$ for $\lam\in X^+(G)\setminus \{0\}$. Let
$\rho^*$ be the representation dual to $\rho$.  By \cite[Lemma
73]{Ste}, $\om(\rho^*)=-w_0\om'$ where $w_0$ is the element of the
Weyl group sending all positive roots to the negative ones.
 It is clear
that $-w_0\al=\al$. So $ a (\rho^* )=a$, $ a (\lam )\geq -a$   for
$\lam\in X(\rho)$ since $-\lam\in X(\rho^*)$, and $a(\mu)\leq 2a.$

\medskip
We shall consider all representations  $\rho$ with $\om'\in\Omega$
 and  find
out when (2) holds. If $G=A_n(P)$ or $C_n(P)$, then
$a=\sum_{i=1}a_ i$ for $\om'=( a_1, \ldots,     a_n   )$.

\medskip
 If $G=A_n(P)$,
then $a(\rho)\leq p-1$ for $\om'\in\Omega$.  In this  situation
(2) would imply that $ a(\mu)=p$, $\mu'_1-\mu_2'=\om_i$  which is
false since $\om_i$ is not a radical weight.

\medskip
Let $G=C_n(P)$ and   $n>2$. For $p>2$,  (2) cannot hold since
$a(\rho)\leq (p-1)/2$ for $\om'\in\Omega$ and  $a(\mu)\leq p-1 $. If
$p=2$, then the weight $\mu$ takes  each value $2\om_i$, $i\leq n$
for $\om' =\om_n$ and $\mu=2\om_1$ or $\om_2$ for $\om'
=\om_1$. (This can be easily verified taking into account that
$C_n(P)\cong B_n(P)$ and that $\om_n$ a miniscule weight of $B_n(
P)$.) Therefore,  (2)  is equivalent to the fact that  $\om'
=\om_n$, $\psi=\phi(\om_1) \otimes Fr\circ \psi'$ where $\psi'\in
\Irr G$.

\medskip
 Let
$ G=G_2(P)$. Then one can directly verify   that
$\mu\in\{\om_1,\om_2,2\om_1\}$ for $\om' =\om_1$  and
$\mu\in\{\om_2,2\om_2,3\om_1\}$ for $\om' =\om_2$, $p=3$; all
these options  are realized.  It is clear that (2) holds in the following
cases only: $p =2$,  $~\om' =\om_1$, $~\psi=\rho\otimes Fr\circ
\psi'$ and $p=3$, $~\om' =\om_2$, $~\psi=\phi(\om_1) \otimes
Fr\circ \psi'$,  $~\psi'\in \Irr G$.

\medskip
Let $G\neq C_n(P)$, $A_n(P)$, or $G_2(P)$. In this situation
$a(\rho )= 1$ for $\om'\in\Omega$. So $a(\mu)\leq 2$.  Since for
$G=D_n(P)$, $E_6(P)$ or $E_7(P)$, the weight $\lam\in X^+(G)$ is
not radical  if $ a(\lam)=1$, and $p>2$ for $G=B_n(P)$ or
$F_4(P)$, the condition (2) does not hold.

Taking into account  that representations $\psi$ and $Fr^j\circ
\psi $ from $\Irr G$ simultaneously belong or do not belong to
$I(G)$, we complete  the proof by induction   on $k$.
\end{proof}



\newpage


\bigskip
Institute of Mathematics of Acad. Sci. of BSSR~~~~~~~~~~~~~Received 16.4.1986


\bigskip
{\bf Remarks on translation.} (1) In the proof of Proposition 1 one could apply \cite[2.5]{Se} in order to conclude that the vectors $v_j$ are nonzero. (2) Now one can cite the article: A.E. Zalesskii, I.D. Suprunenko, Reduced symmetric powers of natural realizations of the groups $SL_m(P)$ and $Sp_m(P)$ and their restrictions on subgroups, Siberian Math. J., 31:4 (1990), $555-566,$ Proposition 1.4, instead of the preprint \cite{S1}.

\end{document}